 \newtheorem{thmA}{Theorem}
\newtheorem{corA}[thmA]{Corollary}
\newtheorem{quA}[thmA]{Question}
\newtheorem{theorem}{Theorem}[section]
\newtheorem{corollary}[theorem]{Corollary}
\newtheorem{lemma}[theorem]{Lemma}
\newtheorem{proposition}[theorem]{Proposition}
\newtheorem*{theorem*}{Theorem}
\theoremstyle{definition}
\newtheorem{question}[theorem]{Question}
\newtheorem*{remark*}{Remark}
\newtheorem*{notation*}{Notation}
\newtheorem*{acks*}{Acknowledgements}
\renewcommand\leq{\leqslant}
\renewcommand\geq{\geqslant}
\newcommand{\Aut}{\operatorname{Aut}}
\newcommand{\Out}{\operatorname{Out}}
\newcommand{\Inn}{\operatorname{Inn}}
\newcommand{\MCG}{\operatorname{MCG}}
\newcommand{\cato}{\operatorname{CAT(0)}}
\begin{document}

\title{A note on separability in outer automorphism groups}
\author{Francesco Fournier-Facio}
\date{\today}
\maketitle

\begin{abstract}
We give a criterion for separability of subgroups of certain outer automorphism groups. This answers questions of Hagen and Sisto, by strengthening and generalising a result of theirs on mapping class groups.
\end{abstract}

\section{Introduction}

A subgroup $H \leq G$ is called \emph{separable} if for all $g \in G \setminus H$ there exists a finite quotient $q \colon G \to Q$ such that $q(g) \notin q(H)$. A question of Reid \cite[Question 3.5]{reid} asks whether all convex-cocompact subgroups of mapping class groups (as defined in \cite{farbmosher}, see also \cite{cc:char} for a characterisation) are separable. This was first verified for virtually cyclic subgroups \cite{mcg:sep}, and the full conjecture is known conditionally on the residual finiteness of hyperbolic groups \cite{BHMS}. Hagen and Sisto show in \cite{hagensisto} that certain examples of free convex-cocompact subgroups, constructed by Mj \cite{mj}, are separable. In order to do so, they prove the following criterion.

\begin{theorem*}[{\cite[Theorem 1.1]{hagensisto}}]
Let $g \geq 2$, and let $H \leq \MCG(\Sigma_g)$. Suppose that $H$ is torsion-free, malnormal, and convex-cocompact. If the preimage of $H$ under the natural quotient map $\MCG(\Sigma_g \setminus \{p\}) \to \MCG(\Sigma_g)$, for some point $p \in \Sigma_g$, is conjugacy separable, then $H$ is separable.
\end{theorem*}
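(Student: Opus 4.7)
The strategy is to pass through the Birman exact sequence
\[
1 \to \pi_1(\Sigma_g) \to \MCG(\Sigma_g \setminus \{p\}) \to \MCG(\Sigma_g) \to 1
\]
and reduce the problem to $\tilde H$. First I would observe that separability of $H$ in $\MCG(\Sigma_g)$ is equivalent to separability of $\tilde H$ in $\MCG(\Sigma_g \setminus \{p\})$: one implication is automatic since $\tilde H$ is the preimage of $H$, while for the other, any finite-index subgroup of $\MCG(\Sigma_g \setminus \{p\})$ containing $\tilde H$ automatically contains $\pi_1(\Sigma_g)$ and therefore descends to a finite-index subgroup of $\MCG(\Sigma_g)$ containing $H$. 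It thus suffices, given $g \in \MCG(\Sigma_g) \setminus H$ and a chosen lift $\tilde g$, to produce a finite quotient of $\MCG(\Sigma_g \setminus \{p\})$ in which the image of $\tilde g$ avoids the image of $\tilde H$.

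To construct such a quotient, I would look for a ``witness'' $\gamma \in \pi_1(\Sigma_g) \setminus \{1\}$ such that $\tilde g \gamma \tilde g^{-1}$ is not $\tilde H$-conjugate to $\gamma$. Unwinding, $\tilde g \gamma \tilde g^{-1} \sim_{\tilde H} \gamma$ is equivalent to $\tilde g \in \tilde H \cdot C(\gamma)$, where $C(\gamma)$ is the centralizer of $\gamma$ in $\MCG(\Sigma_g \setminus \{p\})$; projecting to $\MCG(\Sigma_g)$ this becomes $g \in H \cdot \operatorname{Stab}([\gamma])$, the stabilizer being that of the free homotopy class represented by $\gamma$. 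So the task reduces to finding a non-trivial free homotopy class of loops on $\Sigma_g$ whose $g$-translate is not in its $H$-orbit. Here is where the geometric hypotheses enter: convex-cocompactness implies that $H$ is purely pseudo-Anosov, so no non-trivial element of $H$ fixes a free homotopy class; combined with malnormality, the quasi-convex hulls of $H$ and $g \cdot H$ in the curve complex should have bounded coarse intersection, allowing us to select $\gamma$ whose class sits near $\operatorname{hull}(H)$ but far from its $g$-translate. Carrying out this selection is the main obstacle, and the step where I expect the bulk of the work to lie.

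Once the witness $\gamma$ is in hand, the remainder is formal. Conjugacy separability of $\tilde H$ provides a finite-index normal subgroup $N \trianglelefteq \tilde H$ whose quotient separates the conjugacy classes of $\gamma$ and $\tilde g \gamma \tilde g^{-1}$. By residual finiteness of the mapping class group (Grossman), there exists $M \trianglelefteq \MCG(\Sigma_g \setminus \{p\})$ of finite index with $M \cap \tilde H \leq N$. If, in $\MCG(\Sigma_g \setminus \{p\})/M$, the image of $\tilde g$ agreed with that of some $h \in \tilde H$, then $\tilde g \gamma \tilde g^{-1}$ and $h \gamma h^{-1}$ would project to the same element, and pushing further to $\tilde H/N$ would exhibit $\tilde H$-conjugacy between $\gamma$ and $\tilde g \gamma \tilde g^{-1}$ modulo $N$, contradicting the choice of $N$. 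Hence the quotient by $M$ separates $\tilde g$ from $\tilde H$, and by the initial reduction this finishes the proof.
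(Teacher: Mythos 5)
Your reduction to the Birman exact sequence and the reformulation of the problem are sound and agree with the structure of the argument the paper builds on: since $\widetilde{H}$ contains the point-pushing subgroup $\pi_1(\Sigma_g)$, separability of $H$ in $\MCG(\Sigma_g)$ is indeed equivalent to separability of $\widetilde{H}$ in $\MCG(\Sigma_g \setminus \{p\})$, and finding $\gamma \in \pi_1(\Sigma_g)\setminus\{1\}$ with $\tilde g \gamma \tilde g^{-1}$ not $\widetilde{H}$-conjugate to $\gamma$ is exactly the hypothesis of the Grossman-type criterion (Proposition \ref{prop:criterion}, i.e.\ \cite[Proposition 2.5]{hagensisto}). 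But the decisive step is precisely the one you defer: producing the witness $\gamma$. The curve-complex heuristic (``hulls of $H$ and $gH$ have bounded coarse intersection, so pick $\gamma$ near one and far from the other'') is not an argument, and it is exactly where torsion-freeness, malnormality and convex-cocompactness have to do real work in Hagen--Sisto's original proof. The paper replaces this step entirely: acylindrical hyperbolicity of $\Aut(\pi_1(\Sigma_g))$ yields (Lemma \ref{lem:witness}) a single $x$ whose inner automorphism $\gamma_x$ has centraliser $\langle \gamma_x\rangle$ in $\Aut(\pi_1(\Sigma_g))$, and this one element is a witness for \emph{every} $\alpha \notin \widetilde{H}$ simultaneously, which is why Theorem \ref{thm:mcg} needs no hypotheses on $H$ at all. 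As written, your proposal omits the mathematical heart of the theorem.

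The ``formal'' endgame also contains a genuine error. Residual finiteness of $\MCG(\Sigma_g\setminus\{p\})$ does \emph{not} provide a finite-index normal subgroup $M$ with $M \cap \widetilde{H} \leq N$: that assertion says the profinite topology of the ambient group induces the full profinite topology on $\widetilde{H}$, a strong separability property that is neither known here nor implied by residual finiteness (and which would essentially subsume the kind of statement you are trying to prove). Grossman's argument, and its relative version in \cite[Proposition 2.5]{hagensisto}, works differently: one exploits that the witness $\gamma$ and its translates lie in the finitely generated normal subgroup $\pi_1(\Sigma_g) \cong \Inn(\pi_1(\Sigma_g))$, passes to a finite-index \emph{characteristic} subgroup $K \leq \pi_1(\Sigma_g)$ compatible with the finite quotient furnished by conjugacy separability of $\widetilde{H}$, and then uses the induced action of $\MCG(\Sigma_g\setminus\{p\}) \leq \Aut(\pi_1(\Sigma_g))$ on the finite group $\pi_1(\Sigma_g)/K$ to build the separating quotient. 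So even granting a witness, you would need to invoke (or reprove) that criterion rather than the subgroup $M$ you posit; and with both repairs made, your route is essentially the original Hagen--Sisto proof, whereas the paper's point is to replace the witness construction by the acylindrical-hyperbolicity argument and thereby drop the extra hypotheses on $H$.
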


\begin{remark*}
The theorem is stated for $g \geq 1$, however in that case $\MCG(\Sigma_g)$ is virtually free so every finitely generated subgroup is separable. We focus on the case $g \geq 2$ for coherence with the results of this note.
\end{remark*}

A group $G$ is said to be \emph{conjugacy separable} if for all non-conjugate elements $g, h \in G$ there exists a finite quotient $q \colon G \to Q$ such that $q(g)$ and $q(h)$ are non-conjugate. Our first result removes the other hypotheses on $H$, answering \cite[Question 1.4]{hagensisto}.

\begin{thmA}
\label{thm:mcg}

Let $g \geq 2$, and let $H \leq \MCG(\Sigma_g)$. If the preimage of $H$ under the natural quotient map $\MCG(\Sigma_g \setminus \{p\}) \to \MCG(\Sigma_g)$, for some point $p \in \Sigma_g$, is conjugacy separable, then $H$ is separable.
\end{thmA}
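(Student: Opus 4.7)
The plan is to exploit the Birman exact sequence
$$1 \to N \to \tilde{G} \to G \to 1,$$
where $N := \pi_1(\Sigma_g)$, $\tilde{G} := \MCG(\Sigma_g \setminus \{p\})$, and $G := \MCG(\Sigma_g)$. Since $g \geq 2$, the group $N$ is centreless, so the conjugation action of $\tilde{G}$ on $N$ embeds $\tilde{G} \hookrightarrow \Aut(N)$, with quotient embedding $G \hookrightarrow \Out(N)$ given by Dehn--Nielsen--Baer. Given $g \in G \setminus H$, fix any lift $\tilde{g} \in \tilde{G} \setminus \tilde{H}$; the task is to produce a finite quotient of $G$ in which the image of $g$ does not lie in the image of $H$.

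First, I pick $\phi \in N$ whose free homotopy class on $\Sigma_g$ has trivial stabiliser under $G$; equivalently, $C_{\tilde{G}}(\phi) \leq N$. Such elements exist by a standard genericity argument, e.g.~taking a sufficiently complicated filling loop. Then $\phi$ and $\tilde{g}\phi\tilde{g}^{-1}$ both lie in $N \leq \tilde{H}$, but they are not $\tilde{H}$-conjugate: a conjugating $\tilde{h} \in \tilde{H}$ would yield $\tilde{h}^{-1}\tilde{g} \in C_{\tilde{G}}(\phi) \leq N \leq \tilde{H}$, contradicting $\tilde{g} \notin \tilde{H}$. Applying the conjugacy-separability hypothesis to $\tilde{H}$, I get a finite quotient $q \colon \tilde{H} \twoheadrightarrow Q$ in which $q(\phi)$ and $q(\tilde{g}\phi\tilde{g}^{-1})$ are non-conjugate; set $K := \ker q$.

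The crucial remaining step is to convert this into an honest finite quotient of $G$. By residual finiteness of $N$, pick $N_1 \trianglelefteq N$ characteristic, of finite index, with $N_1 \subseteq K \cap N$. Since $N_1$ is characteristic, the $\tilde{G}$-conjugation action on $N$ descends to $r_0 \colon \tilde{G} \to \Aut(N/N_1)$, which has finite image. Setting $\bar{N} := r_0(N)$ (normal in $r_0(\tilde{G})$) and postcomposing with the projection, I obtain $r \colon \tilde{G} \twoheadrightarrow r_0(\tilde{G})/\bar{N} =: F$; since $N \subseteq \ker r$, the map $r$ factors through $G$, so $F$ is a finite quotient of $G$. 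If $r(\tilde{g}) = r(\tilde{h})$ for some $\tilde{h} \in \tilde{H}$, then $r_0(\tilde{g}) = r_0(n\tilde{h})$ for some $n \in N$, hence $\tilde{g}$ and $n\tilde{h}$ induce identical automorphisms of $N/N_1$; evaluated at $\phi$ and reduced modulo $K$ (using $N_1 \subseteq K$), this gives $q(\tilde{g}\phi\tilde{g}^{-1}) = q(n\tilde{h}) q(\phi) q(n\tilde{h})^{-1}$ in $Q$, contradicting the choice of $q$. Thus $r(\tilde{g}) \notin r(\tilde{H})$, completing the separation.

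The main conceptual obstacle is the last step: upgrading a finite quotient of $\tilde{H}$ that only separates conjugacy classes into a finite quotient of $G$ that separates the subgroup $H$. The device that makes this work is the $\tilde{G}$-conjugation action on a characteristic finite quotient of $N$, which simultaneously detects the conjugacy distinction given by $q$ and descends through the Birman sequence. The surface topology enters only through the existence of $\phi \in \pi_1(\Sigma_g)$ with trivial mapping-class stabiliser of its free homotopy class, together with the residual finiteness of surface groups.
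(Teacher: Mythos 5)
Your overall architecture is the same as the paper's: find a ``witness'' element of $\pi_1(\Sigma_g)$ whose free homotopy class is moved by every non-trivial mapping class, then use conjugacy separability of $\widetilde{H}$ and a Grossman-type descent to manufacture a finite quotient of $\MCG(\Sigma_g)$ separating $g$ from $H$. Your descent step is correct: choosing a characteristic finite-index $N_1\le K\cap N$, acting on $N/N_1$, and quotienting by the image of $N$ is essentially a reproof of the criterion of Hagen--Sisto (quoted in the paper as Proposition \ref{prop:criterion}), and your reduction of ``$\phi$ and $\tilde g\phi\tilde g^{-1}$ are not $\widetilde{H}$-conjugate'' to ``$C_{\tilde G}(\phi)\le N$'' is also fine, since $N\le\widetilde{H}$.

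The genuine gap is the first step: you assert the existence of $\phi\in\pi_1(\Sigma_g)$ whose free homotopy class has trivial stabiliser in $\MCG(\Sigma_g)$ ``by a standard genericity argument, e.g.\ taking a sufficiently complicated filling loop''. This is exactly the paper's key new ingredient, not a routine fact: it is Lemma \ref{lem:witness} (equivalently Proposition \ref{prop:inner}) specialised to $G=\pi_1(\Sigma_g)$, and the paper obtains it from acylindrical hyperbolicity of $\Aut(\pi_1(\Sigma_g))$ together with results of Hull and Dahmani--Guirardel--Osin; it is what answers \cite[Questions 1.5 and 1.6]{hagensisto} and is precisely the reason Hagen--Sisto needed the extra hypotheses (torsion-free, malnormal, convex-cocompact) that this paper removes. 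What is standard is that a \emph{filling} curve has \emph{finite} stabiliser (e.g.\ via convexity and properness of its length function); upgrading finite to trivial requires a real argument, since filling curves invariant under non-trivial finite-order mapping classes exist in every genus, so ``sufficiently complicated'' must actually be produced --- for instance via the random-walk theorem of Maher--Sisto as in Proposition \ref{prop:random}, or via the route through $\Aut(\pi_1(\Sigma_g))$ taken in the paper. As written, your proof assumes the crux of the theorem without proof; supplying that existence statement (with a proof or a precise citation) is what is missing.
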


We can replace the hypothesis of conjugacy separability by a more geometric one, which goes in the direction of Reid's question \cite[Question 3.5]{reid}. This is the same way that Hagen and Sisto verify that their criterion holds for the groups constructed by Mj.

\begin{corA}
\label{cor:mcg}

Let $g \geq 2$, and let $H \leq \MCG(\Sigma_g)$ be a convex-cocompact subgroup. If the preimage of $H$ under the natural quotient map $\MCG(\Sigma_g \setminus \{p\}) \to \MCG(\Sigma_g)$, for some point $p \in \Sigma_g$, acts properly and cocompactly on a $\cato$ cube complex, then $H$ is separable.
\end{corA}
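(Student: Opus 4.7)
The plan is to deduce Corollary~\ref{cor:mcg} from Theorem~\ref{thm:mcg}: it suffices to verify that, under the hypotheses of the corollary, the preimage $\tilde H \leq \MCG(\Sigma_g \setminus \{p\})$ is conjugacy separable.

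First I would invoke Farb and Mosher's characterisation of convex-cocompactness, which asserts that $H$ is convex-cocompact in $\MCG(\Sigma_g)$ if and only if the surface group extension $\tilde H$ is word hyperbolic. So in our setting $\tilde H$ is hyperbolic and, by hypothesis, acts properly and cocompactly on a $\cato$ cube complex.

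From here the standard cubulation toolkit takes over. A hyperbolic group acting properly and cocompactly on a $\cato$ cube complex is virtually compact special by Agol's theorem, and hence embeds, after passing to a finite-index subgroup, as a convex subgroup of a right-angled Artin group by Haglund--Wise. Minasyan's theorem that right-angled Artin groups are hereditarily conjugacy separable, combined with the closure of this property under the relevant subgroup and finite-extension operations, then yields that $\tilde H$ itself is conjugacy separable. Theorem~\ref{thm:mcg} now gives separability of $H$.

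The main subtlety is the transfer of conjugacy separability along the tower of finite-index inclusions: plain conjugacy separability behaves poorly under passage to finite-index supergroups, so one genuinely needs the hereditary form of Minasyan's result rather than merely conjugacy separability of right-angled Artin groups. Once this is in place, the proof amounts to assembling well-known ingredients, mirroring the way Hagen and Sisto verify the hypothesis of their own criterion for Mj's free convex-cocompact examples.
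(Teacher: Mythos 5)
Your proposal is correct and follows essentially the same route as the paper: reduce to Theorem~\ref{thm:mcg}, use convex-cocompactness to get hyperbolicity of $\widetilde{H}$, apply Agol's theorem to get virtual compact specialness, and conclude conjugacy separability via Minasyan's (hereditary) result for right-angled Artin groups. The only minor point is attribution: the implication ``convex-cocompact $\Rightarrow$ $\widetilde{H}$ hyperbolic'' that you use is due to Hamenst\"adt (Farb--Mosher proved the converse direction), which is why the paper cites both.
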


Notice that Corollary \ref{cor:mcg} applies to \emph{all} groups constructed in \cite{mj}, without needing to modify the construction to ensure malnormality, as in \cite[Section 5]{hagensisto}.

\medskip

Theorem \ref{thm:mcg} will follow from a more general result.

\begin{thmA}
\label{thm:general}
Let $G$ be a finitely generated group with trivial centre, and let $H \leq \Out(G)$. Suppose that $\Aut(G)$ is acylindrically hyperbolic and has no non-trivial finite normal subgroups. If the preimage of $H$ under the natural quotient map $\Aut(G) \to \Out(G)$ is conjugacy separable, then $H$ is separable.
\end{thmA}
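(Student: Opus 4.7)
The plan is to exploit the short exact sequence
$$1 \to G \to \Aut(G) \to \Out(G) \to 1,$$
available because $Z(G) = 1$, to identify $G$ with $\Inn(G)$ and conclude that $G \leq \tilde H$. Fix $\phi \in \Out(G) \setminus H$ and a lift $\tilde\phi \in \Aut(G) \setminus \tilde H$. It then suffices to produce a finite-index normal subgroup $M' \trianglelefteq \Aut(G)$ that contains $G$ (so the quotient factors through $\Out(G)$) and satisfies $\tilde\phi \notin \tilde H M'$.

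The heart of the plan is to find an element $\alpha \in G$ with $C_{\Aut(G)}(\alpha) \leq \tilde H$. Granting this, set $\beta := \tilde\phi \alpha \tilde\phi^{-1}$, which lies in $G \leq \tilde H$ since $G$ is normal. If $\tilde h \alpha \tilde h^{-1} = \beta$ for some $\tilde h \in \tilde H$, then $\tilde h^{-1} \tilde\phi \in C_{\Aut(G)}(\alpha) \leq \tilde H$, contradicting $\tilde\phi \notin \tilde H$. Hence $\alpha$ and $\beta$ are non-conjugate in $\tilde H$, and conjugacy separability furnishes a finite quotient $q \colon \tilde H \to Q$ in which $q(\alpha)$ and $q(\beta)$ are not conjugate.

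Producing such an $\alpha$ is, I expect, the main technical obstacle, and is the place where the acylindrical hyperbolicity hypothesis enters. Since $\Aut(G)$ is acylindrically hyperbolic with no non-trivial finite normal subgroup and $G \trianglelefteq \Aut(G)$ is infinite (otherwise $\Aut(G)$ itself would be finite), $G$ contains generalised loxodromic elements of $\Aut(G)$ by the theory of Osin and Dahmani--Guirardel--Osin on normal subgroups of acylindrically hyperbolic groups. The $\Aut(G)$-centraliser of such an $\alpha$ is virtually cyclic, and the plan is to arrange for it to lie inside $G$---equivalently, that no non-inner automorphism of $G$ fixes $\alpha$---by taking $\alpha$ sufficiently generic so that the $\Out(G)$-stabiliser of its $G$-conjugacy class is trivial.

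Finally, to convert $q$ into a finite quotient of $\Out(G)$, let $L$ be the $\Aut(G)$-characteristic core of $\ker q \cap G$ inside $G$: a finite intersection of finite-index subgroups of $G$ of a common index (finite because $G$ is finitely generated), hence it is finite-index and characteristic in $G$, giving $L \trianglelefteq \Aut(G)$. Let $M \trianglelefteq \Aut(G)$ be the kernel of the induced homomorphism $\Aut(G) \to \Aut(G/L)$; this has finite index because $\Aut(G/L)$ is finite. Set $M' := GM$, which is finite-index normal in $\Aut(G)$ and contains $G$. If $\tilde\phi \in \tilde H M' = \tilde H M$, then writing $\tilde\phi = \tilde h m$ with $m \in M$ and using that $m$ acts trivially on $G/L$ yields $\tilde\phi(\alpha) \equiv \tilde h(\alpha) \pmod{L}$, i.e., $\beta (\tilde h \alpha \tilde h^{-1})^{-1} \in L \leq \ker q$; applying $q$ then forces $q(\beta) = q(\tilde h) q(\alpha) q(\tilde h)^{-1}$, contradicting the non-conjugacy in $Q$. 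Hence $\Aut(G)/M'$ is the desired finite quotient of $\Out(G)$ separating $\phi$ from $H$.
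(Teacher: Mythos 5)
Your strategy coincides with the paper's: produce $\alpha \in G$ such that every automorphism of $G$ fixing $\alpha$ is inner (equivalently, since $Z(G)=1$, the centraliser of $\gamma_\alpha$ in $\Aut(G)$ lies in $\Inn(G) \leq \widetilde{H}$), deduce that $\gamma_\alpha$ and $\tilde\phi \gamma_\alpha \tilde\phi^{-1}$ are non-conjugate in $\widetilde{H}$, and promote a finite quotient of $\widetilde{H}$ distinguishing these conjugacy classes to a finite quotient of $\Out(G)$. Your final step, carried out by hand via the characteristic core $L$, the kernel $M$ of $\Aut(G) \to \Aut(G/L)$, and the subgroup $GM$, is correct; the paper instead quotes this conversion as a black box, namely \cite[Proposition 2.5]{hagensisto} (Proposition \ref{prop:criterion}, via Corollary \ref{cor:criterion}), which is exactly Grossman's argument, so nothing is lost there --- you have just reproved it.

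The genuine gap is the existence of $\alpha$, which you yourself flag as the main obstacle and leave as a ``plan''. Knowing that the infinite normal subgroup $\Inn(G)$ contains generalised loxodromic elements of $\Aut(G)$, whose centralisers are virtually cyclic, does not suffice: such a centraliser can still contain non-inner elements (for instance a non-inner root of $\gamma_\alpha$, or a finite-order automorphism fixing $\alpha$), and ``take $\alpha$ sufficiently generic'' is not an argument. Note moreover that the genericity statement you gesture at is essentially Proposition \ref{prop:random} of the paper, whose proof itself relies on the very lemma you are missing, since admissibility of the induced random walk on $\Aut(G)$ in the sense of Maher--Sisto requires $\Inn(G)$ to already contain an asymmetric element. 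The paper closes this point with Lemma \ref{lem:witness}: because $\Aut(G)$ is acylindrically hyperbolic with no non-trivial finite normal subgroups, the infinite normal subgroup $\Inn(G)$ is \emph{suitable} \cite[Lemma 2.4]{osin}; a suitable subgroup contains an element $\gamma$ whose elementary closure in $\Aut(G)$ is $\langle \gamma \rangle$ \cite[Lemma 5.6]{hull}; and the centraliser of $\gamma$ is contained in its elementary closure \cite[Corollary 6.6]{DGO}, hence equals $\langle \gamma \rangle \leq \Inn(G)$. With that lemma supplied (and the observation that $G$, hence $\Inn(G)$, is infinite since $\Aut(G)$ is acylindrically hyperbolic --- which you did make), your argument is complete and is essentially the paper's.
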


Thanks to recent works proving acylindrical hyperbolicity of automorphism groups \cite{auto:oneend, auto:infend, auto:graph1, auto:graph2, auto:graph3}, this can be applied in several contexts. We isolate two instances.

\begin{corA}
\label{cor:hyperbolic}
Let $G$ be a torsion-free hyperbolic group, and let $H \leq \Out(G)$. If the preimage of $H$ under the natural quotient map $\Aut(G) \to \Out(G)$ is conjugacy separable, then $H$ is separable.
\end{corA}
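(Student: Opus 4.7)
The plan is to deduce the corollary from Theorem \ref{thm:general} by checking its hypotheses for torsion-free hyperbolic $G$. First I would dispose of the elementary cases: if $G$ is trivial or infinite cyclic then $\Out(G)$ is finite, so every subgroup is trivially separable. I can therefore assume $G$ is non-elementary.

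A non-elementary torsion-free hyperbolic group is finitely generated with trivial centre (the centraliser of any infinite-order element in a hyperbolic group is virtually cyclic, so a non-trivial central element would force $G$ itself to be elementary), so the first hypothesis of Theorem \ref{thm:general} is satisfied. Acylindrical hyperbolicity of $\Aut(G)$ then follows from the cited results \cite{auto:oneend, auto:infend}, which handle the one-ended and infinitely-ended cases and together exhaust the non-elementary torsion-free hyperbolic situation.

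The main step I expect is verifying that $\Aut(G)$ has no non-trivial finite normal subgroup. My approach would be to use that $\Inn(G) \cong G$ is a torsion-free normal subgroup of $\Aut(G)$. If $F \lhd \Aut(G)$ is finite, then $[F, \Inn(G)] \subseteq F \cap \Inn(G) = \{1\}$, since both factors are normal and $\Inn(G)$ is torsion-free, so $F$ centralises $\Inn(G)$. Combining $\varphi \iota_g = \iota_g \varphi$ with the standard identity $\varphi \iota_g \varphi^{-1} = \iota_{\varphi(g)}$ gives $\iota_{\varphi(g)} = \iota_g$ for every $\varphi \in F$ and $g \in G$, and the triviality of the centre of $G$ then forces $\varphi(g) = g$, so $\varphi = \id$ and $F = \{1\}$. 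With all three hypotheses in place, Theorem \ref{thm:general} delivers the conclusion.
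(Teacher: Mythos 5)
Your proposal is correct, and its overall skeleton (dispose of the elementary cases, then verify the hypotheses of Theorem \ref{thm:general}) is the same as the paper's; the genuine difference is in how the ``no non-trivial finite normal subgroups'' hypothesis is verified. The paper routes this through Lemma \ref{lem:root}: it quotes that a torsion-free non-elementary hyperbolic group has trivial centre \emph{and} the unique root property, and then argues that for a finite normal $N \leq \Aut(G)$ the conjugation action of $\Inn(G)$ on $N$ has finite-index kernel $K$, every $\alpha \in N$ fixes $K$ pointwise, and unique roots upgrade $\alpha(x^n)=x^n$ to $\alpha(x)=x$. You instead observe that $F \cap \Inn(G)$ is a finite subgroup of the torsion-free group $\Inn(G) \cong G$, hence trivial, so $[F,\Inn(G)] \leq F \cap \Inn(G) = \{1\}$ by normality of both subgroups, and then triviality of the centre (via $\varphi \iota_g \varphi^{-1} = \iota_{\varphi(g)}$) kills $F$. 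Your argument is more elementary in that it needs only torsion-freeness plus trivial centre rather than the stronger unique root property (and it would equally serve the RAAG corollary, since RAAGs are torsion-free); the paper's lemma is stated the way it is so that a single statement, keyed to unique roots, covers all its applications with citations to the literature. Your direct verification of trivial centre via virtually cyclic centralisers of infinite-order elements is also fine, where the paper simply cites the same reference used for unique roots; and the minor discrepancy in which acylindricity reference covers which case (one-ended versus infinitely-ended) is a citation detail, not a gap.
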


\begin{corA}
\label{cor:raag}
Let $\Gamma$ be a finite simplicial graph that does not decompose as a join of two non-empty subgraphs. Let $A_\Gamma$ be the corresponding right-angled Artin group, and let $H \leq \Out(A_\Gamma)$. If the preimage of $H$ under the natural quotient map $\Aut(A_\Gamma) \to \Out(A_\Gamma)$ is conjugacy separable, then $H$ is separable.
\end{corA}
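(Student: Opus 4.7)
The plan is to obtain Corollary \ref{cor:raag} as a direct application of Theorem \ref{thm:general} to $G = A_\Gamma$. The work therefore reduces to verifying, under the hypothesis that $\Gamma$ admits no non-trivial join decomposition, the four hypotheses of that theorem: that $A_\Gamma$ is finitely generated, that $Z(A_\Gamma) = 1$, that $\Aut(A_\Gamma)$ is acylindrically hyperbolic, and that $\Aut(A_\Gamma)$ has no non-trivial finite normal subgroup.

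Finite generation is immediate from the finiteness of $\Gamma$. For the centre, I would invoke the standard identification of $Z(A_\Gamma)$ with the subgroup generated by the vertices of $\Gamma$ that are adjacent to every other vertex; any such vertex $v$ produces a join decomposition $\Gamma = \{v\} \ast (\Gamma \setminus \{v\})$, so the hypothesis forces $Z(A_\Gamma) = 1$ (the degenerate one-vertex case gives $A_\Gamma \cong \mathbb{Z}$ and $\Out(A_\Gamma)$ finite, so the statement is trivially true). Acylindrical hyperbolicity of $\Aut(A_\Gamma)$ under the non-join hypothesis is the deep input, and I would simply cite \cite{auto:graph1, auto:graph2, auto:graph3}.

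The only condition requiring a short argument is the absence of non-trivial finite normal subgroups in $\Aut(A_\Gamma)$, which I would establish via the following general observation: for any torsion-free group $G$ with trivial centre, $\Aut(G)$ has no non-trivial finite normal subgroup. Indeed, if $N \trianglelefteq \Aut(G)$ is finite, then torsion-freeness of $\Inn(G) \cong G$ forces $N \cap \Inn(G) = 1$; since both $N$ and $\Inn(G)$ are normal in $\Aut(G)$, this gives $[N, \Inn(G)] \subseteq N \cap \Inn(G) = 1$, so $N$ centralises $\Inn(G)$; and triviality of $Z(G)$ then yields $\phi(g)g^{-1} \in Z(G) = 1$ for every $\phi \in N$ and $g \in G$, whence $\phi = \id$. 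Applying this to the torsion-free centreless group $A_\Gamma$ completes the verification, and Theorem \ref{thm:general} delivers the conclusion. The substantive input of the proof lies entirely in the cited acylindrical hyperbolicity result; the remaining steps are routine, and I do not anticipate any genuine obstacle.
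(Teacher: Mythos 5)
Your proposal is correct, and its overall skeleton is the same as the paper's: dispose of the degenerate small graphs (the paper treats ``at most one vertex'', so you may as well also note the empty graph), then verify the hypotheses of Theorem \ref{thm:general}, with the acylindrical hyperbolicity of $\Aut(A_\Gamma)$ for non-join $\Gamma$ with at least two vertices quoted from the literature (the paper cites \cite[Theorem 1.5]{auto:graph2}). Where you genuinely diverge is the verification that $\Aut(A_\Gamma)$ has no non-trivial finite normal subgroups. The paper routes this through Lemma \ref{lem:root}: trivial centre plus the \emph{unique root property} (both quoted for RAAGs from \cite{raag:magnus}) imply the conclusion, via the observation that a finite normal subgroup of $\Aut(G)$ must fix a finite-index subgroup of $G$ pointwise. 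You instead prove the cleaner general statement that for any torsion-free group $G$ with $Z(G)=1$ the group $\Aut(G)$ has no non-trivial finite normal subgroup: $N \cap \Inn(G)=1$ by torsion-freeness, two normal subgroups intersecting trivially commute, and $C_{\Aut(G)}(\Inn(G))=1$ when the centre is trivial. This argument is correct, and in fact its hypothesis is weaker than the paper's (unique roots imply torsion-freeness), so it would serve equally well in the proof of Corollary \ref{cor:hyperbolic}; what the paper's Lemma \ref{lem:root} buys in exchange is a statement applicable verbatim whenever unique roots are what is recorded in the literature. Your derivation of $Z(A_\Gamma)=1$ from the non-join hypothesis via Servatius' description of the centre, rather than citing it directly, is likewise fine.
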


Both of these corollaries apply to $\Out(F_n)$. In this case, there are notions of convex-cocompact subgroups \cite{outf:cc1, outf:cc2, outf:cc3}; the corresponding preimage in $\Aut(F_n)$ is hyperbolic \cite{outf:cc1}, but we do not know of instances in which conjugacy separability of such groups is known (besides the case of cyclic subgroups).

\begin{quA}
Are there examples of subgroups $H \leq \Out(F_n)$ that are separable, not virtually cyclic, and satisfy some version of convex-cocompactness?
\end{quA}

We remark that free-by-cyclic groups $F_n \rtimes \mathbb{Z}$ often have non-separable subgroups \cite{monika}.

\medskip

The proof of Theorem \ref{thm:general} involves an element of $G$ that recognises non-inner automorphisms. The existence of such an element relies on the acylindrical hyperbolicity of $\Aut(G)$, and is what allows to strengthen and generalise the criterion of Hagen and Sisto, with a shorter proof. It partially answers \cite[Questions 1.5]{hagensisto} (Proposition \ref{prop:inner}) and \cite[Question 1.6]{hagensisto} (Proposition \ref{prop:random}).

\begin{acks*}
The author thanks Mark Hagen, Jonathan Fruchter, Monika Kudlinska, Alessandro Sisto, Ric Wade and Henry Wilton for useful comments on a first version; and the anonymous referee for more useful comments, especially for suggesting Proposition \ref{prop:random}.
\end{acks*}

\section{Proofs}

We will use $\pi$ to denote the quotient map $\pi \colon \Aut(G) \to \Out(G)$, and for a subgroup $H \leq \Out(G)$ we denote $\widetilde{H} \coloneqq \pi^{-1} H \leq \Aut(G)$. The starting point is the following criterion for separability, which in turn is based on Grossman's criterion for residual finiteness \cite{grossman}.

\begin{proposition}[{\cite[Proposition 2.5]{hagensisto}}]
\label{prop:criterion}
Let $G$ be a finitely generated group with trivial centre. Let $H \leq \Out(G)$ and $\alpha \in \Aut(G)$. Suppose that:
\begin{enumerate}
\item There exists $x \in G$ such that $\alpha(x) \neq h(x)$ for all $h \in \widetilde{H}$;
\item $\widetilde{H}$ is conjugacy separable.
\end{enumerate}
Then there exists a finite quotient $q \colon \Out(G) \to Q$ such that $q(\pi(\alpha)) \notin q(H)$.
\end{proposition}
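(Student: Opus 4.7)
My plan is to use Assumption 1 and Assumption 2 in tandem inside $\widetilde H$, and then bridge to $\Out(G)$ via a characteristic-core construction. The key observation is that, since $G$ has trivial centre, the map $g \mapsto i_g$ identifies $G$ with $\Inn(G) \leq \ker \pi \leq \widetilde H$; so even though $\alpha$ itself need not lie in $\widetilde H$, the inner automorphisms $i_x$ and $i_{\alpha(x)} = \alpha i_x \alpha^{-1}$ both do. The conjugacy class of $i_x$ in $\widetilde H$ equals $\{i_{h(x)} : h \in \widetilde H\}$, and (again because the centre is trivial) $i_{h(x)} = i_{\alpha(x)}$ forces $h(x) = \alpha(x)$. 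Thus Assumption 1 is equivalent to the statement that $i_x$ and $i_{\alpha(x)}$ are not conjugate in $\widetilde H$.

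Next, I would apply Assumption 2 to produce a finite quotient $r \colon \widetilde H \to Q'$ in which $r(i_x)$ and $r(i_{\alpha(x)})$ are not conjugate. To convert this into a finite quotient of $\Out(G)$, I would invoke the standard characteristic-core trick: intersect all $\Aut(G)$-translates of $\ker(r) \cap G$ to obtain a finite-index $\Aut(G)$-invariant subgroup $N \trianglelefteq G$. This intersection is actually finite, because the finitely generated group $G$ has only finitely many subgroups of each given finite index. The induced action of $\Aut(G)$ on the finite group $G/N$ yields a homomorphism $\Aut(G) \to \Aut(G/N)$, and since $\Inn(G)$ maps into $\Inn(G/N)$ this descends to a map $\bar P \colon \Out(G) \to \Out(G/N)$ with finite codomain.

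It then remains to verify $\bar P(\pi(\alpha)) \notin \bar P(H)$. If it were, we could write $\alpha = h \gamma$ with $h \in \widetilde H$ and $\gamma \in \Aut(G)$ acting trivially on $G/N$. Writing $\gamma(x) = xn$ with $n \in N$, a direct computation gives $i_{\gamma(x)} = i_x i_n$ in $\Aut(G)$, and hence in $\widetilde H$. Since $n \in N \leq \ker r$, we have $r(i_n) = 1$, so $r(i_{\gamma(x)}) = r(i_x)$. Combined with the identity $i_{\alpha(x)} = h i_{\gamma(x)} h^{-1}$, this shows $r(i_{\alpha(x)})$ is conjugate to $r(i_x)$ in $Q'$, contradicting the choice of $r$.

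The main conceptual step is the first one: translating Assumption 1 into a non-conjugacy statement inside $\widetilde H$, which crucially uses the trivial-centre hypothesis to identify $G$ with a subgroup of $\widetilde H$. Once this bridge is in place, everything else is a routine deployment of the characteristic-core construction familiar from Grossman's proof of residual finiteness, and the hypothesis $\pi(\alpha) \notin H$ is already built into Assumption 1 (taking $h = \alpha$ would violate it if $\alpha \in \widetilde H$).
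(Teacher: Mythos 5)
Your proof is correct, and it is essentially the argument behind the cited result: note that the present paper does not prove Proposition \ref{prop:criterion} at all but quotes it from Hagen--Sisto, and their proof is the same Grossman-style scheme you use (trivial centre turns assumption (1) into non-conjugacy of $i_x$ and $i_{\alpha(x)}=\alpha i_x\alpha^{-1}$ inside $\widetilde{H}$, conjugacy separability gives the finite quotient $r$, and the finite-index $\Aut(G)$-invariant core $N$ -- finite generation being used exactly where you use it -- yields the finite quotient $\Out(G)\to\Out(G/N)$). The one step you should make explicit is the decomposition $\alpha=h\gamma$: if the images of $\pi(\alpha)$ and of some element of $H$ agree in $\Out(G/N)$, then for a lift $h_0\in\widetilde{H}$ the automorphism $h_0^{-1}\alpha$ only induces an \emph{inner} automorphism of $G/N$, say conjugation by $gN$; since $i_g\in\Inn(G)\leq\widetilde{H}$ you may replace $h_0$ by $h_0 i_g$ so that $\gamma=h^{-1}\alpha$ acts trivially on $G/N$, after which your computation with $\gamma(x)=xn$, $n\in N$, goes through verbatim.
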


Let us formulate the special case that we will use:

\begin{corollary}
\label{cor:criterion}
Let $G$ be a finitely generated group with trivial centre and let $H \leq \Out(G)$. Suppose that:
\begin{enumerate}
\item There exists $\gamma \in \Inn(G)$ such that the centraliser of $\gamma$ in $\Aut(G)$ is $\langle \gamma \rangle$;
\item $\widetilde{H}$ is conjugacy separable.
\end{enumerate}
Then $H$ is separable.
\end{corollary}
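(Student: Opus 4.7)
The plan is to apply Proposition \ref{prop:criterion} directly. Given any $\alpha \in \Aut(G)$ with $\pi(\alpha) \notin H$, equivalently $\alpha \notin \widetilde{H}$, I need to exhibit an $x \in G$ such that $\alpha(x) \neq h(x)$ for every $h \in \widetilde{H}$; hypothesis (2) will then supply a finite quotient of $\Out(G)$ separating $\pi(\alpha)$ from $H$, and running $\alpha$ over all such representatives yields separability of $H$.

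The natural candidate for $x$ is the element $y \in G$ with $\gamma = \iota_y$. Because $G$ has trivial centre, the familiar computation $\phi \iota_y \phi^{-1} = \iota_{\phi(y)}$ gives $\phi \iota_y \phi^{-1} = \iota_y$ if and only if $\phi(y) = y$. Hypothesis (1) therefore translates into the statement that the stabiliser of $y$ in $\Aut(G)$ is exactly $\langle \iota_y \rangle = \{\iota_{y^n} : n \in \mathbb{Z}\}$.

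Taking $x = y$, suppose for contradiction that $\alpha(y) = h(y)$ for some $h \in \widetilde{H}$. Then $h^{-1}\alpha$ fixes $y$, so by the reformulated hypothesis (1) we have $h^{-1}\alpha = \gamma^n$ for some integer $n$. Since $\widetilde{H} = \pi^{-1}(H)$ contains $\ker(\pi) = \Inn(G)$, we have $\gamma^n \in \widetilde{H}$, hence $\alpha = h\gamma^n \in \widetilde{H}$, contradicting $\alpha \notin \widetilde{H}$. Proposition \ref{prop:criterion} then delivers the finite quotient we need.

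There is essentially no obstacle in deducing this corollary from Proposition \ref{prop:criterion}: the only minor point is the centraliser-versus-stabiliser translation, which is routine given that $Z(G) = 1$. The genuinely hard work lies ahead, in producing elements $\gamma$ satisfying hypothesis (1) in the settings of Theorem \ref{thm:general} and its corollaries, which is where the acylindrical hyperbolicity of $\Aut(G)$ is expected to enter.
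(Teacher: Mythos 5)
Your proposal is correct and follows essentially the same route as the paper: apply Proposition \ref{prop:criterion} to an arbitrary $\alpha \notin \widetilde{H}$ with $x$ the element realising $\gamma$, noting that $h^{-1}\alpha$ fixing $x$ forces $h^{-1}\alpha \in \langle \gamma \rangle \leq \Inn(G) \leq \widetilde{H}$, a contradiction. The only cosmetic difference is that you pass through the stabiliser/centraliser equivalence (using $Z(G)=1$), while the paper only needs the trivial direction that fixing $x$ implies centralising $\gamma_x$.
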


\begin{proof}
We show that Proposition \ref{prop:criterion} holds for $H$ and an arbitrary $\alpha \in \Aut(G) \setminus \widetilde{H}$. Choose $x \in G$ such that the corresponding inner automorphism $\gamma_x$ is as in the first assumption of the corollary. Suppose that $\alpha(x) = h(x)$ for some $h \in \widetilde{H}$. Then $h^{-1} \alpha (x) = x$, and so $h^{-1} \alpha$ belongs to the centraliser of $\gamma_x$ in $\Aut(G)$. By the choice of $x$, we have $h^{-1} \alpha \in \langle \gamma_x \rangle \leq \Inn(G) \leq \widetilde{H}$, and which contradicts $\alpha \notin \widetilde{H}$.
\end{proof}

Acylindrical hyperbolicity of $\Aut(G)$ ensures that the first item holds.

\begin{lemma}
\label{lem:witness}
Let $G$ be a group such that $\Inn(G)$ is infinite and $\Aut(G)$ is acylindrically hyperbolic and has no non-trivial finite normal subgroups. Then there exists $\gamma \in \Inn(G)$ such that the centraliser of $\gamma$ in $\Aut(G)$ is $\langle \gamma \rangle$.
\end{lemma}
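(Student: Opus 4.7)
The plan is to produce $\gamma \in \Inn(G)$ such that $\langle \gamma \rangle$ is hyperbolically embedded in $\Aut(G)$, and then to invoke the almost malnormality of hyperbolically embedded subgroups to force $C_{\Aut(G)}(\gamma) = \langle \gamma \rangle$.

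First I would observe that $\Inn(G)$ is an infinite normal subgroup of the acylindrically hyperbolic group $\Aut(G)$, and since the finite radical of $\Aut(G)$ is trivial, $\Inn(G)$ cannot be contained in it. By standard results of Osin on normal subgroups of acylindrically hyperbolic groups, this guarantees that $\Inn(G)$ contains an element $\gamma_0$ which is loxodromic for every acylindrical action of $\Aut(G)$ on a hyperbolic space (a generalised loxodromic element of $\Aut(G)$). I would then apply the Dahmani-Guirardel-Osin theory of hyperbolically embedded subgroups: combined with the triviality of the finite radical, it allows one to extract a suitable power $\gamma \in \Inn(G)$ of $\gamma_0$ for which $\langle \gamma \rangle \hookrightarrow_h \Aut(G)$. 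The triviality of the finite radical is essential at this point: without it, one would typically only obtain a hyperbolically embedded subgroup of the form $\langle \gamma \rangle \times F$ with $F$ a non-trivial finite subgroup of $\Aut(G)$, rather than the cyclic group itself.

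Once $\langle \gamma \rangle \hookrightarrow_h \Aut(G)$ is in hand, the conclusion is formal. By almost malnormality of hyperbolically embedded subgroups, for every $\alpha \in \Aut(G) \setminus \langle \gamma \rangle$ the intersection $\alpha \langle \gamma \rangle \alpha^{-1} \cap \langle \gamma \rangle$ is finite. But any $\alpha \in C_{\Aut(G)}(\gamma)$ satisfies $\alpha \langle \gamma \rangle \alpha^{-1} = \langle \gamma \rangle$, so the intersection is all of $\langle \gamma \rangle$, which is infinite; hence $\alpha \in \langle \gamma \rangle$, giving $C_{\Aut(G)}(\gamma) = \langle \gamma \rangle$. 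The main obstacle is the middle step: upgrading a generalised loxodromic element in $\Inn(G)$ to one whose cyclic subgroup is itself hyperbolically embedded. This is exactly where both hypotheses on $\Aut(G)$ come into play, as acylindrical hyperbolicity alone ensures only that the elementary closure $E(\gamma)$ is virtually cyclic and hyperbolically embedded, while triviality of the finite radical is what rules out torsion in $E(\gamma)$ after passing to a suitable power.
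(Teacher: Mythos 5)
Your overall strategy (find $\gamma \in \Inn(G)$ with $\langle \gamma \rangle$ hyperbolically embedded, then use almost malnormality to pin down the centraliser) is sound in its first and last steps, but the middle step --- the one you yourself flag as the main obstacle --- does not work as written. Passing to a power never helps: for a generalised loxodromic element one has $E(\gamma_0^n) = E(\gamma_0)$ for every $n \neq 0$, so if the elementary closure of $\gamma_0$ is strictly larger than a cyclic group, the same is true for all of its powers. Moreover, triviality of the finite radical of $\Aut(G)$ only excludes finite subgroups that are normal in \emph{all} of $\Aut(G)$; it says nothing about the maximal finite subgroup of $E(\gamma_0)$, which is normalised only by $E(\gamma_0)$. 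For instance, a group with trivial finite radical can perfectly well contain generalised loxodromic elements whose elementary closure is infinite dihedral-like and contains torsion (already in $\mathbb{Z}/2 * \mathbb{Z}/2 * \mathbb{Z}/2$, and note that $\Aut(G)$ itself typically has plenty of torsion, e.g.\ $\Aut(F_n)$), and no power of such an element has cyclic elementary closure. So ``an infinite normal subgroup contains a generalised loxodromic element, now take a power'' cannot yield $\langle \gamma \rangle \hookrightarrow_h \Aut(G)$ in general.

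The missing ingredient is a genuinely stronger result, and it is exactly what the paper uses: since $\Aut(G)$ is acylindrically hyperbolic with no non-trivial finite normal subgroups, the infinite normal subgroup $\Inn(G)$ is a \emph{suitable} subgroup (Osin), and a suitable subgroup contains a loxodromic element $\gamma$ with $E_{\Aut(G)}(\gamma) = \langle \gamma \rangle$ (Hull, Lemma 5.6). The proof of that lemma is not obtained from a single generalised loxodromic element and its powers; it exploits the existence of several independent loxodromic elements in the suitable subgroup (and the absence of a finite subgroup normalised by it) to manufacture one whose elementary closure is cyclic. Once you have such a $\gamma$, your concluding argument via almost malnormality of hyperbolically embedded subgroups is fine and is essentially equivalent to the paper's appeal to the fact that the centraliser of $\gamma$ is contained in $E(\gamma)$ (DGO, Corollary 6.6). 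So the fix is to replace your power-taking step by the suitability argument plus Hull's lemma.
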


In fact, such a $\gamma$ can be found by performing a simple random walk on $G$: see Proposition \ref{prop:random} and its proof.

\begin{proof}
Recall that a subgroup of an acylindrically hyperbolic group is called \emph{suitable} if it is non-elementary and does not normalise any non-trivial finite normal subgroup. In an acylindrically hyperbolic group with no non-trivial finite normal subgroups, every infinite normal subgroup is suitable \cite[Lemma 2.4]{osin}. In particular, $\Inn(G) \leq \Aut(G)$ is suitable. Therefore there exists an inner automorphism $\gamma \in \Inn(G)$ such that the elementary closure of $\gamma$ in $\Aut(G)$ is reduced to $\langle \gamma \rangle$ \cite[Lemma 5.6]{hull}. In particular, the centraliser of $\gamma$ in $\Aut(G)$ is reduced to $\langle \gamma \rangle$ \cite[Corollary 6.6]{DGO}.
\end{proof}

\begin{proof}[Proof of Theorem \ref{thm:general}]
Combine Lemma \ref{lem:witness} and Corollary \ref{cor:criterion}.
\end{proof}

For the next applications, we will use the following criterion to check that an automorphism group has no non-trivial finite normal subgroups. We say that $G$ has the \emph{unique root property} if $x^n = y^n$ for some $x, y \in G, n \geq 1$ implies $x = y$.

\begin{lemma}
\label{lem:root}
Let $G$ be a group with trivial centre and with the unique root property. Then $\Aut(G)$ has no non-trivial finite normal subgroups.
\end{lemma}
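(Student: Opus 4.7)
The plan is to show that any finite normal subgroup $N \trianglelefteq \Aut(G)$ is trivial, by a two-step argument: first I would establish that $N \cap \Inn(G) = \{1\}$, and then I would show that every $\alpha \in N$ centralises $\Inn(G)$, which together with the triviality of the centre of $G$ forces $\alpha = \id$.

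For the first step, the key observation is that the unique root property makes $G$ torsion-free. Indeed, if $x \in G$ satisfies $x^n = 1$ for some $n \geq 1$, then $x^n = 1^n$, so the unique root property yields $x = 1$. Since $G$ has trivial centre, the canonical map $G \to \Inn(G)$ is an isomorphism, so $\Inn(G)$ is torsion-free as well. A finite subgroup of a torsion-free group is trivial, so $N \cap \Inn(G) = \{1\}$.

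For the second step, fix $\alpha \in N$ and $x \in G$. By normality of $N$ in $\Aut(G)$ we have $\gamma_x \alpha \gamma_x^{-1} \in N$, hence the product $(\gamma_x \alpha \gamma_x^{-1})\alpha^{-1}$ lies in $N$. A direct computation using $\alpha \gamma_x^{-1} \alpha^{-1} = \gamma_{\alpha(x)}^{-1}$ gives
\[
\gamma_x \alpha \gamma_x^{-1} \alpha^{-1} \;=\; \gamma_x \gamma_{\alpha(x)}^{-1} \;=\; \gamma_{x\alpha(x)^{-1}},
\]
which therefore lies in $N \cap \Inn(G) = \{1\}$ by the first step. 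Since $G$ has trivial centre, this forces $x\alpha(x)^{-1} = 1$ for every $x \in G$, i.e.\ $\alpha = \id$.

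I do not expect a genuine obstacle here: the argument is just an application of the standard principle that a normal subgroup meeting $\Inn(G)$ trivially must centralise it, combined with the centre-free assumption. The only part that uses the hypotheses of the lemma beyond ``trivial centre'' is the elementary observation that the unique root property implies torsion-freeness, which is what drives the first step.
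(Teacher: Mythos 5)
Your proof is correct, but it takes a genuinely different route from the paper's. You only use the weaker consequence that $G$ is torsion-free: once $N \cap \Inn(G) = \{1\}$, normality of both $N$ and $\Inn(G)$ places the commutator $\gamma_x \alpha \gamma_x^{-1}\alpha^{-1} = \gamma_x\gamma_{\alpha(x)}^{-1} = \gamma_{x\alpha(x)^{-1}}$ in the trivial intersection, and triviality of the centre then forces $\alpha = \id$; this is the standard principle that a finite normal subgroup meeting $\Inn(G)$ trivially must centralise it, combined with the fact that $C_{\Aut(G)}(\Inn(G)) = \{1\}$ when $G$ is centreless. The paper argues differently and uses the full unique root hypothesis: it takes the finite-index kernel $K \leq G$ of the conjugation action of $\Inn(G)$ on $N$, notes (again via trivial centre) that every $\alpha \in N$ fixes $K$ pointwise, and then from $x^n \in K$ gets $x^n = \alpha(x^n) = \alpha(x)^n$, whence $\alpha(x) = x$ by unique roots. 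Your version is in fact slightly more general, since it proves the lemma for any centreless torsion-free group and unique roots is a strictly stronger hypothesis than torsion-freeness; the paper's version is the one that exploits the unique root property exactly as stated. Either argument suffices for the applications, since torsion-free hyperbolic groups and the relevant right-angled Artin groups have unique roots (and in particular are torsion-free).
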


\begin{proof}
Suppose that $N \leq \Aut(G)$ is a finite normal subgroup. The action of $G \cong \Inn(G)$ on $N$ by conjugacy has a finite index kernel $K$. Then every element of $K$ commutes with every element of $N$, in other words, automorphisms in $N$ fix $K$ pointwise. Now let $\alpha \in N$ and $x \in G$. Let $n \geq 1$ be such that $x^n \in K$, so $x^n$ is fixed by $\alpha$. Then $x^n = \alpha(x^n) = \alpha(x)^n$, so by the unique root property $\alpha(x) = x$. This shows that $\alpha$ fixes every element of $G$ and we conclude.
\end{proof}

\begin{proof}[Proof of Corollary \ref{cor:hyperbolic}]
If $G$ is torsion-free elementary hyperbolic, then $G$ is either trivial or isomorphic to $\mathbb{Z}$, and in both cases $\Out(G)$ is finite, so all subgroups are separable.

If $G$ is torsion-free non-elementary hyperbolic, then $\Aut(G)$ is acylindrically hyperbolic \cite[Theorem 1.3]{auto:infend}. Moreover, $G$ has trivial centre and the unique root property \cite[Lemma 2.2]{root} and so $\Aut(G)$ has no non-trivial finite normal subgroups by Lemma \ref{lem:root}. Therefore Theorem \ref{thm:general} applies.
\end{proof}

\begin{proof}[Proof of Corollary \ref{cor:raag}]
Let $G = A_\Gamma$ be as in the statement. If $\Gamma$ has at most one vertex, then $G$ is either trivial or isomorphic to $\mathbb{Z}$, and in both cases $\Out(G)$ is finite, so all subgroups are separable.

If $\Gamma$ has at least two vertices, then $\Aut(G)$ is acylindrically hyperbolic \cite[Theorem 1.5]{auto:graph2}. Moreover, $G$ has trivial centre and the unique root property \cite[3-2) and 3-3)]{raag:magnus} and so $\Aut(G)$ has no non-trivial finite normal subgroups by Lemma \ref{lem:root}. Therefore Theorem \ref{thm:general} applies.
\end{proof}

For the results on mapping class groups, we apply Corollary \ref{cor:hyperbolic} to the special case of surface groups.

\begin{proof}[Proof of Theorem \ref{thm:mcg}]
Recall the Dehn--Nielsen--Baer Theorem: $\MCG(\Sigma_g)$ can be identified with an index-$2$ subgroup of $\Out(\pi_1(\Sigma_g))$, and $\MCG(\Sigma_g \setminus \{ p \})$ is the corresponding index-$2$ subgroup of $\Aut(\pi_1(\Sigma_g))$. Under these identifications, given a subgroup $H \leq \MCG(\Sigma_g)$, its preimage in $\MCG(\Sigma_g \setminus \{ p \})$ is the same as its preimage in $\Aut(\pi_1(\Sigma_g))$. Since $\pi_1(\Sigma_g)$ is torsion-free non-elementary hyperbolic, we can apply Corollary \ref{cor:hyperbolic} to get separability of $H$ in $\Out(\pi_1(\Sigma_g))$, which then implies separability in $\MCG(\Sigma_g)$.
\end{proof}

\begin{proof}[Proof of Corollary \ref{cor:mcg}]
By Theorem \ref{thm:mcg}, it suffices to show that, under the hypotheses, the preimage $\widetilde{H}$ is conjugacy separable. By convex-cocompactness of $H$, $\widetilde{H}$ is hyperbolic \cite{farbmosher, hamen}. By assumption $\widetilde{H}$ acts properly and cocompactly on a $\cato$ cube complex, and so it is virtually compact special \cite{agol, wise}. It follows that $\widetilde{H}$ is conjugacy separable \cite{special}.
\end{proof}

Let us end by addressing two questions from \cite{hagensisto}, which ask for elements that recognise non-inner automorphisms. The observation behind Lemma \ref{lem:witness} allows to answer both, under the assumptions of Theorem \ref{thm:general}. The two questions are asked for torsion-free acylindrically hyperbolic groups, with the case of hyperbolic groups being singled out. It is an open question whether the automorphism group of a finitely generated acylindrically hyperbolic group is always acylindrically hyperbolic \cite[Question 1.1]{auto:oneend}.

\begin{question}[{\cite[Question 1.5]{hagensisto}}]

Let $G$ be a torsion-free acylindrically hyperbolic group, and let $\phi_1, \ldots, \phi_n$ be non-inner automorphisms of $G$. Does there exist $x \in G$ with $x$ and $\phi_i(x)$ non-conjugate for all $i$?
\end{question}

\begin{proposition}
\label{prop:inner}

Let $G$ be a group such that $\Inn(G)$ is infinite and $\Aut(G)$ is acylindrically hyperbolic and has no non-trivial finite normal subgroups: for instance, a torsion-free non-elementary hyperbolic group. Then there exists $x \in G$ with the following property: for every non-inner automorphism $\phi$, $x$ and $\phi(x)$ are non-conjugate.
\end{proposition}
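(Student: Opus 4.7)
The plan is to apply Lemma \ref{lem:witness} to produce an element $x \in G$ whose inner automorphism $\gamma_x$ has centraliser exactly $\langle \gamma_x \rangle$ in $\Aut(G)$, and then to show that this single $x$ already has the desired recognition property. The mechanism is the same one used in the proof of Corollary \ref{cor:criterion}: conjugacy of $\phi(x)$ to $x$ in $G$ can be converted into a centraliser statement in $\Aut(G)$ via the identity $\phi \gamma_x \phi^{-1} = \gamma_{\phi(x)}$.

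More concretely, first I would invoke Lemma \ref{lem:witness} to fix $x \in G$ with $C_{\Aut(G)}(\gamma_x) = \langle \gamma_x \rangle$. Next I would assume toward the contrapositive that some $\phi \in \Aut(G)$ satisfies $\phi(x) = y x y^{-1}$ for some $y \in G$. Applying $\gamma_{(\cdot)}$ to both sides gives $\phi \gamma_x \phi^{-1} = \gamma_y \gamma_x \gamma_y^{-1}$, so $\gamma_y^{-1} \phi$ lies in the centraliser of $\gamma_x$, which by the choice of $x$ is $\langle \gamma_x \rangle \leq \Inn(G)$. Therefore $\phi \in \gamma_y \langle \gamma_x \rangle \subseteq \Inn(G)$. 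The contrapositive is exactly the claim: if $\phi$ is non-inner, then $\phi(x)$ is not conjugate to $x$.

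For the "for instance" clause, I would verify that a torsion-free non-elementary hyperbolic group $G$ satisfies the standing hypotheses exactly as in the proof of Corollary \ref{cor:hyperbolic}: $G$ is infinite with trivial centre so $\Inn(G) \cong G$ is infinite; $\Aut(G)$ is acylindrically hyperbolic by \cite[Theorem 1.3]{auto:infend}; and $G$ has the unique root property \cite[Lemma 2.2]{root}, so Lemma \ref{lem:root} gives that $\Aut(G)$ has no non-trivial finite normal subgroups.

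There is no real obstacle here: the only non-trivial input is Lemma \ref{lem:witness}, which has already been established, and the rest is the same short centraliser manipulation that underlies Corollary \ref{cor:criterion}. The proposition is essentially a repackaging of Lemma \ref{lem:witness} phrased in terms of conjugacy of $x$ and $\phi(x)$ rather than equality.
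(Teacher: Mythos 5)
Your proposal is correct and follows essentially the same route as the paper: apply Lemma \ref{lem:witness} to get $x$ with $C_{\Aut(G)}(\gamma_x)=\langle\gamma_x\rangle$, then convert $\phi(x)=yxy^{-1}$ into the statement that $\gamma_y^{-1}\phi$ centralises $\gamma_x$, forcing $\phi\in\Inn(G)$. The paper phrases the middle step as ``$\gamma_y^{-1}\phi$ fixes $x$, hence centralises $\gamma_x$'' rather than via the identity $\phi\gamma_x\phi^{-1}=\gamma_{\phi(x)}$, but this is the same argument.
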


Recall that torsion-free non-elementary hyperbolic groups do indeed satisfy the hypotheses, as we saw in the proof of Corollary \ref{cor:hyperbolic}.

\begin{proof}
Let $x$ be such that $\gamma_x$ satisfies the statement of Lemma \ref{lem:witness}. Suppose that $\phi(x) = hxh^{-1}$. Then $\gamma_h^{-1} \phi$ fixes $x$, so it centralises $\gamma_x$. By the choice of $x$, we have $\gamma_h^{-1} \phi \in \langle \gamma_x \rangle \leq \Inn(G)$ and so $\phi \in \Inn(G)$.
\end{proof}

\begin{question}[{\cite[Question 1.6]{hagensisto}}]

Let $G$ be a torsion-free acylindrically hyperbolic group, let $\phi$ be a non-inner automorphism of $G$, and let $(w_n)$ be a simple random walk on $G$. Is it true that, with probability going to $1$ as $n$ goes to infinity, $w_n$ is not conjugate to $\phi(w_n)$?
\end{question}

Note that finite generation is implicit in this question, as \emph{simple} random walks are not defined over infinitely generated groups.

\begin{proposition}
\label{prop:random}

Let $G$ be a finitely generated group with trivial centre such that $\Aut(G)$ is acylindrically hyperbolic and has no non-trivial finite normal subgroups: for instance, a torsion-free non-elementary hyperbolic group. Let $(w_n)$ be a simple random walk on $G$. Then, with probability going to $1$ as $n$ goes to infinity, $w_n$ has the following property: for every non-inner automorphism $\phi$, $w_n$ and $\phi(w_n)$ are non-conjugate.
\end{proposition}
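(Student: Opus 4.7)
The plan is to follow the proof of Proposition \ref{prop:inner} almost verbatim, replacing the deterministic existence statement of Lemma \ref{lem:witness} by a random-walk analogue. As in the proof of Proposition \ref{prop:inner}, once I know that the centraliser of $\gamma_{w_n}$ in $\Aut(G)$ equals $\langle \gamma_{w_n} \rangle$, the same short manipulation (write $\phi(w_n) = h w_n h^{-1}$ and observe that $\gamma_h^{-1} \phi$ must lie in $\langle \gamma_{w_n} \rangle \leq \Inn(G)$) shows that $w_n$ and $\phi(w_n)$ are non-conjugate for every non-inner $\phi$. So it suffices to show that this centraliser condition on $\gamma_{w_n}$ holds with probability tending to $1$.

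Via the isomorphism $G \cong \Inn(G)$ (valid since $Z(G) = 1$), the simple random walk on $G$ pushes forward to a random walk on $\Aut(G)$ whose step distribution is finitely supported and generates the normal subgroup $\Inn(G)$. As in the proof of Lemma \ref{lem:witness}, $\Inn(G)$ is suitable in the acylindrically hyperbolic group $\Aut(G)$, and in particular the induced walk is non-elementary with respect to the acylindrical action of $\Aut(G)$ on a hyperbolic space. Invoking the theory of random walks on acylindrically hyperbolic groups (Maher--Tiozzo, together with its refinements), with probability tending to $1$ the element $\gamma_{w_n}$ is loxodromic, its translation length tends to infinity, and its elementary closure $E(\gamma_{w_n})$ equals $\langle \gamma_{w_n} \rangle$. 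Since the centraliser of $\gamma_{w_n}$ in $\Aut(G)$ is contained in $E(\gamma_{w_n})$ by \cite[Corollary 6.6]{DGO} and obviously contains $\gamma_{w_n}$, the conclusion follows.

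The main obstacle, and essentially the only input beyond Proposition \ref{prop:inner}, is isolating the "generic primitivity" statement $E(\gamma_{w_n}) = \langle \gamma_{w_n} \rangle$ from the random-walk literature. Loxodromicity and linear growth of translation length are classical by Maher--Tiozzo; what remains is to rule out, with probability tending to $1$, that $\gamma_{w_n}$ is a proper power in $\Aut(G)$ or that $E(\gamma_{w_n})$ contains an extra torsion or orientation-reversing element. The decay of these bad events should follow from combining the linear growth of translation length with the fact that only boundedly many elements of $\Aut(G)$ can share the axis of $\gamma_{w_n}$ while having translation length below a fixed threshold, so each undesirable configuration forces $\gamma_{w_n}$ into a sparse subset of $\Inn(G)$ whose hitting probability decays to $0$.
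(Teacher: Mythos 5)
Your overall strategy coincides with the paper's: push the simple random walk forward to $\Aut(G)$ via $G \cong \Inn(G)$, show that with probability tending to $1$ the elementary closure $E(\gamma_{w_n})$ equals $\langle \gamma_{w_n} \rangle$, deduce from \cite[Corollary 6.6]{DGO} that the centraliser of $\gamma_{w_n}$ is $\langle \gamma_{w_n} \rangle$, and conclude by the computation of Proposition \ref{prop:inner}. The reduction and the endgame are fine. The genuine gap is the step you yourself flag as the main obstacle: the generic asymmetry statement $E(\gamma_{w_n}) = \langle \gamma_{w_n} \rangle$ cannot be dispatched by appealing to ``Maher--Tiozzo and refinements'' plus the sketch in your last paragraph. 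Maher--Tiozzo gives loxodromicity and linear growth of translation length, but says nothing about $E(\gamma_{w_n})$. Your proposed patch --- that any bad configuration involves elements coarsely preserving the axis of $\gamma_{w_n}$ with translation length below a fixed threshold, of which there are boundedly many by acylindricity --- does not handle the proper power case: a $k$-th root of $\gamma_{w_n}$ has translation length roughly $\tau(\gamma_{w_n})/k$, which also grows linearly and so never drops below a fixed threshold. Moreover, even for the finite part of $E(\gamma_{w_n})$, a uniform bound on the number of offending elements does not by itself give that the probability of any such element existing tends to $0$; making that decay rigorous is precisely the substantive content of the missing result, not a routine consequence of acylindricity.

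The statement you need is exactly \cite[Theorem 2.5]{mahersisto}: if the step distribution is \emph{admissible} --- bounded support generating a non-elementary subgroup that contains an \emph{asymmetric} element, i.e.\ one whose elementary closure is cyclic --- then $\gamma_{w_n}$ is asymmetric with probability tending to $1$. To verify admissibility for the pushforward $\mu$ of the uniform measure on a finite generating set of $G$, you need not only that $\Inn(G)$ is non-elementary (suitability, as you note), but also that $\Inn(G)$ contains an asymmetric element; this is supplied by the proof of Lemma \ref{lem:witness}, which produces $\gamma \in \Inn(G)$ with $E(\gamma) = \langle \gamma \rangle$ (not merely with cyclic centraliser), and this is where that lemma enters the random-walk argument. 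With this citation and the admissibility check, your argument closes and is the paper's proof; without it, the key probabilistic step remains unproved.
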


\begin{proof}
We will use a result from \cite{mahersisto}, from which we recall some terminology, in the special case we are interested in. We call an element $\alpha \in \Aut(G)$ \emph{asymmetric} if its elementary closure is $\langle \alpha \rangle$. Let $\mu$ be a probability distribution on $\Aut(G)$. We say that $\mu$ is \emph{admissible} (with respect to a fixed acylindrical action) if the support of $\mu$ is bounded and generates a non-elementary subgroup containing an asymmetric element. Let $\nu$ be the uniform measure on a finite generating set of $G$, and let $\mu$ be the pushforward of $\nu$ under the map $G \to \Aut(G)$. The simple random walk $(w_n)$ is generated by $\nu$, and it induces a random walk $(\gamma_{w_n})$ generated by $\mu$. Since the support of $\mu$ is finite, and it generates $\Inn(G)$ which is non-elementary and contains an asymmetric element (Lemma \ref{lem:witness}), $\mu$ is admissible. The cyclic subgroups $\langle \gamma_{w_n} \rangle$ are called \emph{random subgroups} of $\Aut(G)$ (for $k = 1$) in the language of \cite{mahersisto}.

Now we can apply \cite[Theorem 2.5]{mahersisto}, which states that with probability going to $1$ as $n$ goes to infinity, $\gamma_{w_n}$ is an asymmetric element of $\Aut(G)$. This implies that the centraliser of $\gamma_{w_n}$ is $\langle \gamma_{w_n} \rangle$ \cite[Corollary 6.6]{DGO}, and we conclude as in Proposition \ref{prop:inner}.
\end{proof}

\footnotesize

\bibliographystyle{amsalpha}
\bibliography{ref}

\vspace{0.5cm}

\normalsize

\noindent{\textsc{Department of Pure Mathematics and Mathematical Statistics, University of Cambridge, UK}}

\noindent{\textit{E-mail address:} \texttt{ff373@cam.ac.uk}}

\end{document}